\def\te{\theta}
\def\N{\mathbb{N}}
\def\R{\mathbb{R}}
\def\U{\mathcal U}
\def\e{{\sf e}}
\def\r{{\rm r}}
\def\d{{\rm d}}
\def\bu{\bullet}
\def\({\left(}
\def\[{\left[}
\def\){\right)}
\def\]{\right]}
\def\G{{\sf G}}
\def\H{{\sf H}}
\def\p{\parallel}
\def\<{\langle}
\def\>{\rangle}
\providecommand{\norm}[1]{\lVert#1\rVert}
 \newtheorem{thm}{Theorem}[section]
 \newtheorem{cor}[thm]{Corollary}
 \newtheorem{lem}[thm]{Lemma}
 \newtheorem{prop}[thm]{Proposition}
 \theoremstyle{definition}
 \newtheorem{defn}[thm]{Definition}
 \theoremstyle{remark}
 \newtheorem{rem}[thm]{Remark}
 \newtheorem{ex}[thm]{Example}
 \numberwithin{equation}{section}
\numberwithin{equation}{section}
\begin{document}


\title{A criterion for hypersymmetry on discrete groupoids}

\author{F. Flores}

\author{F. Flores
\footnote{
\textbf{2010 Mathematics Subject Classification:} Primary 43A20, Secondary 47L65, 47L30.
\newline
\textbf{Key Words:} Groupoid, symmetric Banach algebra, Fell Bundle, C* algebra. 
}
}

\maketitle


\begin{abstract}
Given a Fell bundle $\mathscr C\overset{q}{\to}\Xi$ over the discrete groupoid $\Xi$, we study the symmetry of the associated Hahn algebra $\ell^{\infty,1}(\Xi\!\mid\!\mathscr C)$ in terms of the isotropy subgroups of $\Xi$. We prove that $\Xi$ is symmetric (resp. hypersymmetric) if and only if all of the isotropy subgroups are symmetric (resp. hypersymmetric). We also characterize hypersymmetry using Fell bundles with constant fibers, showing that for discrete groupoids, 'hypersymmetry' equals 'rigid symmetry'.
\end{abstract}

\section{Introduction}\label{intro}

This article treats the symmetry of certain Banach $^*$-algebras connected with Fell bundles over discrete groupoids. The study of symmetry for groupoid algebras started not long ago, with Austad and Ortega \cite{AO}, and found a continuation in \cite{FJM}, where Jaur\'e, M\u antoiu and myself first treated the problem of symmetry for Fell bundles over discrete groupoids.

\begin{defn} \label{symmetric}
A Banach $^*$-algebra $\mathfrak B$ is called {\it symmetric} if the spectrum of $b^*b$ is positive for every $b\in\mathfrak B$ (this happens if and only if the spectrum of any self-adjoint element is real).
\end{defn}

In this regard, the main result of the paper is the following. 

\begin{thm}\label{promised}
Let $\Xi$ be a discrete groupoid. Then: \begin{enumerate}
    \item The algebra $\ell^{\infty,1}(\Xi)$ is symmetric if and only if every isotropy group is symmetric.
    \item The algebra $\ell^{\infty,1}(\Xi\!\mid\!\mathscr C)$ is symmetric for every Fell bundle $\mathscr C$ over $\Xi$ if and only if every isotropy group is hypersymmetric.
\end{enumerate}
\end{thm}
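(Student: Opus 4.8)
The plan is to prove both parts at once through two successive reductions, joined by a Hulanicki-type spectral characterization of symmetry, and to treat the two implications of each biconditional by different mechanisms: necessity by a conditional-expectation/inverse-closedness argument, and sufficiency by a decomposition into transitive pieces. Throughout I use that symmetry is equivalent to the real-spectrum condition on self-adjoint elements, and that for a self-adjoint element of an $\ell^\infty$-direct sum the spectrum is the closure of the union of the componentwise spectra.

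For necessity, fix a unit $x$ and let $G=\Xi_x^x$ be its isotropy group. Sections supported on $G$ form a closed $^*$-subalgebra of $\ell^{\infty,1}(\Xi\!\mid\!\mathscr C)$, on which the Hahn norm restricts to the $\ell^1$-norm, so this subalgebra is isometrically $\ell^1(G\!\mid\!\mathscr C|_G)$. The restriction map $P_x(f)=f|_G$ is a contractive idempotent, and — because a product $\alpha\beta\in G$ with one factor in $G$ forces the other factor into $G$ as well — it is an $\ell^1(G\!\mid\!\mathscr C|_G)$-bimodule projection. A bimodule projection onto a closed subalgebra is automatically spectrally invariant: if $a\in\ell^1(G\!\mid\!\mathscr C|_G)$ is invertible in the big algebra with inverse $b$, then applying $P_x$ to $ab=ba=1$ and using the bimodule identity shows $P_x(b)$ inverts $a$ inside $\ell^1(G\!\mid\!\mathscr C|_G)$. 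Hence the isotropy algebra is inverse-closed, so symmetry of the groupoid algebra descends to it. For part (1) this gives symmetry of $\ell^1(G)$; for part (2), running the argument for every Fell bundle $\mathscr C$ over $\Xi$, together with the fact that every Fell bundle $\mathscr D$ over $G$ is the restriction $\mathscr C|_G$ of some Fell bundle over $\Xi$ (extend over the orbit of $x$ by the transitive/Morita structure and put a trivial bundle elsewhere), yields symmetry of $\ell^1(G\!\mid\!\mathscr D)$ for all $\mathscr D$, i.e. hypersymmetry of $G$.

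For sufficiency, I would first reduce to the transitive case. Since distinct orbits share no arrows, $\ell^{\infty,1}(\Xi\!\mid\!\mathscr C)$ is the $\ell^\infty$-direct sum of the algebras $\ell^{\infty,1}(\Xi_O\!\mid\!\mathscr C_O)$ attached to the reductions $\Xi_O$ of $\Xi$ to its orbits $O$, and by the spectrum-of-a-direct-sum remark the total algebra is symmetric exactly when each transitive piece is. A transitive groupoid with isotropy $G$ is (after choosing connecting arrows) the product of a pair groupoid with $G$, so its Hahn algebra is a Schur-type matrix algebra over the isotropy algebra; at the level of representation theory every irreducible representation is induced from a representation of $G$. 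The intended engine is therefore: via the Hulanicki criterion, symmetry of the transitive algebra is equivalent to the $\ell^1$-spectral radius of self-adjoint elements coinciding with the $C^*$-norm, and the induction from $G$ lets one estimate the former by quantities controlled at the isotropy level. For trivial coefficients the inducing data are honest $G$-representations, so symmetry of $\ell^1(G)$ suffices; for a general Fell bundle the fibers inject an arbitrary coefficient $C^*$-algebra $A$ into the induction, so one needs symmetry of $\ell^1(G\!\mid\!\mathscr D)$ for every Fell bundle over $G$ — equivalently, by the paper's identification of hypersymmetry with rigid symmetry, symmetry of $\ell^1(G,A)$ for all $A$. This is precisely why part (1) asks only for symmetry while part (2) demands hypersymmetry.

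The main obstacle is the transitive step, namely the spectral-radius estimate transporting symmetry between $\ell^1(G\!\mid\!\mathscr D)$ and the Schur–Hahn matrix amplification over $G$ produced by the pair-groupoid factor. The delicate point is that symmetry is not a Morita invariant for Banach $^*$-algebras, so one cannot simply invoke an equivalence; instead one must verify by hand that the pair-groupoid direction contributes only a nuclear $C^*$-factor and that the induced-representation norm is faithfully captured, which for nontrivial bundles forces the appearance of arbitrary coefficient $C^*$-algebras and hence the passage from symmetry to hypersymmetry. A secondary technical point to pin down carefully is the Fell-bundle extension used in necessity for part (2), ensuring that every bundle over an isotropy group genuinely arises by restriction from the whole groupoid.
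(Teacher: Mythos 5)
There is a genuine gap: your proof never actually establishes the transitive case, which is where all the mathematical content of the sufficiency direction lies. Your overall architecture (necessity by restricting to isotropy subalgebras, sufficiency by decomposing into orbits and treating transitive groupoids) coincides with the paper's, but for the transitive step you offer only an ``intended engine'' --- a Hulanicki-type spectral-radius criterion applied to a ``Schur--Hahn matrix amplification'' via induced representations --- and you yourself label it ``the main obstacle,'' correctly noting that symmetry is not a Morita invariant, which is exactly why this route cannot be waved through without estimates. No estimate is carried out. The paper resolves this step by a completely different and concrete mechanism: Proposition \ref{lema} shows that a discrete transitive groupoid with isotropy $\G$ is isomorphic to the transformation groupoid $(\G\times\U)\ltimes_\gamma\U$ after endowing $\U$ with an arbitrary \emph{abelian} group structure; Lemma \ref{reduction} produces a contractive $^*$-epimorphism $\ell^1_\Gamma\big(\G\times\U,\mathcal C_0(\U,\mathfrak A)\big)\twoheadrightarrow \ell^{\infty,1}\big((\G\times\U)\ltimes_\gamma\U,\mathfrak A\big)$, so that symmetry (which passes to quotients) only needs to be proved for a group crossed-product $\ell^1$-algebra; Proposition \ref{separation} splits off $\ell^1(\G)$ as a projective tensor factor; and then the group-level theorems finish: \cite[Theorem 7]{LP} (rigidly symmetric $\times$ abelian is rigidly symmetric) for part (2), and the Stone--von Neumann theorem together with \cite[Theorem 1]{Ku} and \cite[Theorem 5]{LeN} for part (1). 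Nothing in your proposal substitutes for this chain, so the ``if'' direction of both statements remains unproven.

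Two secondary points are also wrong or unproven as stated. First, the Hahn algebra is \emph{not} the $\ell^\infty$-direct sum of the orbit algebras: by Remark \ref{convergence} its elements vanish at infinity, so it is the $c_0$-type sum (the closure of the finitely supported sections). Moreover your claimed spectral formula for self-adjoint elements of an $\ell^\infty$-direct sum fails for general Banach $^*$-algebras, since invertibility of $\lambda-\Phi$ needs uniformly bounded componentwise resolvents, which symmetry of the components alone does not provide; for the $c_0$-sum the argument does go through (a Neumann series controls all but finitely many components --- this is in effect the $\Phi_n$-approximation argument in the paper's proof of Theorem \ref{full}), so this part is repairable. Second, your necessity argument for part (2) hinges on the unproved claim that every Fell bundle over an isotropy group is the restriction of a Fell bundle over $\Xi$; the paper sidesteps this entirely, because Corollary \ref{usecor} reduces hypersymmetry to constant-fiber bundles, after which the inclusion $\ell^1(\Xi_u^u,\mathfrak A)\hookrightarrow\ell^{\infty,1}(\Xi,\mathfrak A)$ is immediate. (Your bimodule-projection/inverse-closedness mechanism for necessity is valid but unnecessary: a closed $^*$-subalgebra of a symmetric Banach $^*$-algebra is symmetric by \cite[Theorem 11.4.2]{Pa2}.)
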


This result is interesting as it reduces the question of the (hyper)symmetry of a given groupoid to the (hyper)symmetry of its isotropy subgroups and the study of group $\ell^1$-algebras is far more developed. For example, see \cite{AO,FJM,JM,Ma,Po,LP,Ku,LeN} and the reference therein.

The article is divided in 3 parts: Section \ref{pre} deals with preliminares; there we introduce Fell bundles, the Hahn algebra of a Fell bundle and define what (hyper)symmetry for a groupoid means. In Section \ref{char} we introduce a characterization for hypersymmetry using only Fell bundles with constant fibers. Its analogous to the characterization made for groups by Jaur\'e and M\u antoiu in \cite{JM}. Finally, Section \ref{rigsym} deals with the proof of Theorem \ref{promised}. This is achieved by writing a general discrete groupoid as a disjoint union of transitive groupoids and proving that transitive groupoids are isomorphic to transitive transformation groups. Using the available theory for groups yields the desired result.

\section{Preliminaries}\label{pre}

Let $\Xi$ be a groupoid, with unit space $\U:=\Xi^{(0)}$\,, source map $\d$ and range map ${\rm r}$\,. The $\d$ and $\r$-fibers are $\Xi_u=\{\xi\in\Xi\mid \d(\xi)=u\}$ and $\Xi^u=\{\xi\in\Xi\mid \r(\xi)=u\}$. The set of composable pairs is 
$$\Xi^{(2)}\!:=\{(x,y)\!\mid\!{\rm r}(y)=\d(x)\}\,.
$$ 
The isotropy group associated with the unit $u\in \U$ is $\Xi_u^u=\Xi_u\cap \Xi^u$, it is a subgroupoid of $\Xi$, which happens to be a group. We endow the groupoid $\Xi$ with the discrete topology. Let us introduce an important class of groupoids which will be useful.

\begin{defn}\label{transgroup}
A groupoid $\Xi$ is called transitive if for any pair $u,v\in \U$, there exists $x\in\Xi$ such that $\r(x)=u$ and $\d(x)=v$.
\end{defn}

The concept of a transitivity is borrowed from the theory of dynamical systems. And it comes from a natural but hidden action of the groupoid on its unit space. So the groupoid is transitive if and only if this action is transitive, cf \cite[Example 2.2, Corollary 7.8]{FM}. 

In this article we are going to work with {\it Fell bundles} $\mathscr C\overset{q}{\to}\Xi$ over the groupoid ${\Xi}$ (see \cite {Kum,MW}). A Fell bundle is composed of fibers and it satisfies that each fiber $\mathfrak C _x\!:=q^{-1}(\{x\})$ is a Banach space with norm $\p\!\cdot\!\p_{\mathfrak C_x}$\,, the topology of $\mathscr C$ coincides with the norm topology on each fiber, there are antilinear continuous involutions 
$$
\mathfrak C _x\ni \!a\to a^{\bu}\!\in\mathfrak C _{x^{-1}}
$$ 
and for all $(x,y)\in\Xi^{(2)}$ there are continuous multiplications 
$$
\mathfrak C _x\!\times\!\mathfrak C _y\ni(a,b)\to a\bu b\in\mathfrak C _{xy}
$$ 
satisfying the following axioms valid for $a\in\mathfrak C_x\,,b\in\mathfrak C_y\,$ and $(x,y)\in\Xi^{(2)}$\,: 
\begin{itemize}
\item $\p\!ab\!\p_{\mathfrak C_{xy}}\,\le\,\p\!a\!\p_{\mathfrak C_{x}}\p\!b\!\p_{\mathfrak C_{y}}$\,,
\item $(ab)^\bu=b^\bu a^\bu$,
\item $\p\!a^\bu a\!\p_{\mathfrak C_{\d(x)}}=\,\p\!a\!\p_{\mathfrak C_{x}}^2$\,,
 \item $a^\bu a$ is positive in $\mathfrak C_{\d(x)}$\,.
 \end{itemize}
From these axioms it follows that $\mathfrak C _x$ is a $C^*$-algebra for every unit $x\in \U$\,. Sometimes we simply write $\mathscr C=\bigsqcup_{x\in\Xi}\mathfrak C _x$ for the Fell bundle.

Our object of study is {\it the Hahn algebra} $\ell^{\infty,1}(\Xi\!\mid\!\mathscr C)$ adapted to Fell bundles  \cite{MW}, which in our case it is formed by the sections $\Phi:\Xi\to\mathfrak C$ (thus satisfying $\Phi(x)\in\mathfrak C_x$ for every $x\in\Xi$) that can be obtained as a limit of finitely-supported sections in {\it the Hahn-type norm} 
\begin{equation}\label{parts}
\norm{\Phi}_{\infty,1}\,:=\max\Big\{\sup_{u\in \U}\sum_{{\rm r}(x)=u}\!\p\!\Phi(x)\!\p_{\mathfrak C_x}\,,\,\sup_{u\in \U}\sum_{\d(x)=u}\!\p\!\Phi(x)\!\p_{\mathfrak C_x}\!\Big\}.
\end{equation}
 It is a Banach $^*$-algebra under the multiplication
\begin{equation}\label{tiplication}
(\Phi* \Psi)(x):=\sum_{yz=x}\Phi(y)\bullet\Psi\big(z)
\end{equation}
and the involution
\begin{equation}\label{tion}
\Phi^*(x):=\Phi(x^{-1})^\bu.
\end{equation}

\begin{rem}\label{convergence}
Let us point out some of the nature of the functions in $\ell^{\infty,1}(\Xi\!\mid\!\mathscr C)$. If $\Phi_n\in \ell^{\infty,1}(\Xi\!\mid\!\mathscr C)$ is a sequence of sections with finite support and $\Phi_n\to \Phi$, then the convergence is uniform. Indeed, let $x\in\Xi$ and observe that \begin{align*}
    \norm{\Phi_n(x)-\Phi(x)}_{\mathfrak C_x}&\leq \sum_{y\in\Xi^{r(x)}}\norm{\Phi_n(y)-\Phi(y)}_{\mathfrak C_y} \\
    &\leq \sup_{u\in\U}\sum_{y\in\Xi^{u}}\norm{\Phi_n(y)-\Phi(y)}_{\mathfrak C_y} \\
    &\leq\norm{\Phi_n-\Phi}_{\ell^{\infty,1}(\Xi\,\mid\,\mathscr C)}.
\end{align*} So $\lim_n\sup_{x\in\Xi}\norm{\Phi_n(x)-\Phi(x)}_{\mathfrak C_x}=0$. This implies that the function $\Phi$ has countable support and vanishes at $\infty$.
\end{rem}

We denote by $C^*(\Xi\,\vert\,\mathscr C)$ the enveloping $C^*$-algebra of the Hahn algebra $\ell^{\infty,1}(\Xi\!\mid\!\mathscr C)$. It is a known fact that $\ell^{\infty,1}(\Xi\!\mid\!\mathscr C)$ is a dense $^*$-subalgebra of $C^*(\Xi\!\mid\!\mathscr C)$.

\begin{defn} \label{groupsymmetric}
\begin{enumerate}
\item[(i)]
The discrete groupoid $\Xi$ is called {\it symmetric} if the convolution Banach $^*$-algebra $\ell^{\infty,1}(\Xi)$ is symmetric.
\item[(ii)]
The discrete groupoid $\Xi$ is called {\it hypersymmetric} if given any Fell bundle $\mathscr C\!=\bigsqcup_{x\in\Xi}\mathfrak C _x$\,, the Banach $^*$-algebra $\ell^{\infty,1}(\Xi\!\mid\!\mathscr C)$ is symmetric.
\end{enumerate}
\end{defn}

\begin{ex}
If $\Pi\subset X\!\times\!X$ is an equivalence relation on $X$, one can make $\Xi=\Pi$ a discrete groupoid by defining the operations 
\begin{equation*}
\begin{split}
\d(x,y)=(y,y)\,,\quad &\r(x,y)=(x,x)\,,\quad (x,y)(y,z)=(x,z)\,,\quad(x,y)^{-1}\!=(y,x)\,.   
\end{split}
\end{equation*} 
The unit space is $\,\mathcal U={\sf Diag}(X)$ and it gets canonically identified with $X$, via the homeomorphism $(x,x)\mapsto x$\,. In this case all of the isotropy groups correspond to the trivial group $\Pi_u^u=\{(u,u)\}$, so Theorem \ref{full} will guarantee that $\Pi$ is hypersymmetric. A particular examples is the so called {\it pair groupoid}, $\Pi=X\times X$.
\end{ex}

\section{A characterization of hypersymmetry for discrete groupoids}\label{char}

In \cite{FJM} we introduced some special Fell bundles arising from Hilbert bundles to characterize the hypersymmetry of a discrete groupoid. However, in this paper we will improve this characterization: One can actually verify hypersymmetry by looking at much simpler algebras, associated to Fell bundles with constant fibers. Let us make precise the Fell bundles of interest.

\begin{defn}\label{groupoidaktion}
By {\it a (left) groupoid action} of a discrete groupoid $\Xi$ on the $C^*$-bundle $\mathscr A\!:=\bigsqcup_{u\in \U}\mathfrak A_u\overset{p}{\to}\U$ over its unit space  we understand
a continuous map 
$$
\mathscr A\rtimes\Xi:=\{(\alpha,x)\in\mathscr A\!\times\!\Xi\!\mid\!p(\alpha)=\d(x)\}\ni(\alpha,x)\to \mathcal T(\alpha,x)\equiv\mathcal T_x(\alpha)\in\mathscr A,
$$ 
satifying the axioms
\begin{enumerate}
\item[(a)] $p\big[\mathcal T_x(\alpha)\big]={\rm r}(x)$\,, $\forall\,x\in\Xi\,,\,\alpha\in\mathfrak A_{{\rm d}(x)}$\,,
\item[(b)] each $\mathcal T_x$ is a $^*$-isomorphism $:\mathfrak A_{{\rm d}(x)}\!\to\mathfrak A_{{\rm r}(x)}$\,,
\item[(c)] $\mathcal T_u={\rm id}_{\mathfrak A_u}$\,, $\forall\,u\in \U$\,,
\item[(d)] if $(x,y)\in\Xi^{(2)}$ and $(\alpha,y)\in\mathscr A\rtimes\Xi$\,, then $\big(\mathcal T_y(\alpha),x\big)\in\mathscr A\rtimes\Xi$ and $\mathcal T_{xy}(\alpha)=\mathcal T_x\big[\mathcal T_y(\alpha)\big]$\,.
\end{enumerate}
\end{defn}

\begin{defn}\label{aktionbundle}
Let $\mathcal T$ be a groupoid action of $\Xi$ on the $C^*$-bundle $\mathscr A\!:=\bigsqcup_{u\in \U}\mathfrak A_u\overset{p}{\to}\U$. We define its associated Fell bundle as follows. The underlying space is $\mathscr C_{\mathcal T}:=\mathscr A\rtimes\Xi$ with the topology inherited from the product topology and the obvious projection $q$. We endow it with the operations 
$$
(\alpha,x)\bu(\beta,y):=(\alpha\mathcal T_x(\beta),xy)\,, \textup{when } (x,y)\in\Xi^{(2)}
$$ and
$$
(\alpha,x)^\bu:=(\mathcal T_{x^{-1}}(\alpha^*),x^{-1})
$$
to get a Fell bundle over $\Xi$\,. A section is now a map $\Phi:\Xi\to\mathscr C_{\mathcal T}$ such that 
$$
\Phi(x)\equiv\big(\varphi(x),x\big)\in\mathfrak C_x=\mathfrak A_{{\r}(x)}\!\times\!\{x\}\,,\quad\forall\,x\in\Xi\,.
$$
So we may identify every section $\Phi\in \ell^{\infty,1}(\Xi\,\vert\,\mathscr C_{\mathcal T})$ with a function $\Phi:\Xi\to\mathscr A$ (note the abuse of notation), such that $ \Phi(x)\in\mathfrak A_{{\r}(x)}$. 

We will also denote the algebra $\ell^{\infty,1}(\Xi\,\vert\,\mathscr C_{\mathcal T})$ by $\ell^{\infty,1}_{\mathcal T}(\Xi,\mathscr A)$ to recall its particular nature. If the action $\mathcal T$ is trivial, meaning that $\mathfrak A_u\equiv \mathfrak A$ for all $u\in\U$ and $\mathcal T_x\equiv{\rm id}_{\mathfrak A}$ for all $x\in\Xi$, then we denote the resulting algebra simply by $\ell^{\infty,1}(\Xi,\mathfrak A)$.
\end{defn}

\begin{rem}
Let $\Phi,\Psi\in\ell^{\infty,1}_{\mathcal T}(\Xi,\mathscr A)$. In this case, one may write the algebraic laws as
\begin{equation}\label{unaca}
\big[\Phi*\Psi\big](x)=\!\sum_{y\in\Xi^{{\rm r}(x)}}\!\Phi(y)\mathcal T_y\big[\Psi(y^{-1}x)\big]\,,
\end{equation}
\begin{equation}\label{binaca}
\Phi^*(x)=\mathcal T_{x}\big[\Phi(x^{-1})\big]^*.
\end{equation}
And the Hahn-type norm as
\begin{equation}\label{trinaca}
\p\!\Phi\!\p_{\ell^{\infty,1}_{\mathcal T}(\Xi,\mathscr A)}=\max\Big\{\sup_{u\in \U}\sum_{{\rm r}(x)=u}\!\p\!\Phi(x)\!\p_{\mathfrak A_{\r(x)}}\,,\,\sup_{u\in \U}\sum_{{\rm d}(x)=u}\!\p\!\Phi(x)\!\p_{\mathfrak A_{\r(x)}}\!\Big\}\,.
\end{equation}
\end{rem}

\begin{lem}\label{embeddinglemma}
Let $\mathscr C\!=\bigsqcup_{x\in\Xi}\mathfrak C_x$ be a Fell bundle over the discrete groupoid $\Xi$. Then there exists and an isometric $^*$-monomorphism \begin{equation}\label{eq}
    \varphi:\ell^{\infty,1}(\Xi\,\vert\,\mathscr C)\to \ell^{\infty,1}\big(\Xi,C^*(\Xi\,\vert\,\mathscr C)\big).
\end{equation}
\end{lem}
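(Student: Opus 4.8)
The plan is to build $\varphi$ from the natural identification of each fibre $\mathfrak C_x$ with a family of ``delta sections'' inside $C^*(\Xi\mid\mathscr C)=:\mathfrak A$. For $a\in\mathfrak C_x$ let $a\delta_x$ denote the finitely-supported section of $\mathscr C$ taking the value $a$ at $x$ and $0$ elsewhere; since $\ell^{\infty,1}(\Xi\mid\mathscr C)$ is a dense $^*$-subalgebra of $\mathfrak A$, we may regard $a\delta_x$ as an element of $\mathfrak A$. On a finitely-supported section $\Phi$ I would then define
$$\varphi(\Phi)(x):=\Phi(x)\delta_x\in\mathfrak A,$$
which is again a finitely-supported $\mathfrak A$-valued section, i.e.\ an element of the constant-fibre, trivial-action algebra $\ell^{\infty,1}(\Xi,\mathfrak A)$ of Definition \ref{aktionbundle}.

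That $\varphi$ is a $^*$-homomorphism is a direct computation with delta sections. Using that $(a\delta_s)*(b\delta_t)=(a\bu b)\delta_{st}$ whenever $(s,t)\in\Xi^{(2)}$ (and vanishes otherwise) and that $(a\delta_s)^*=a^\bu\delta_{s^{-1}}$ in $\mathfrak A$, one checks the trivial-action laws \eqref{unaca}--\eqref{binaca}: for composable $(y,y^{-1}x)$ the product $\varphi(\Phi)(y)\,\varphi(\Psi)(y^{-1}x)$ computed in $\mathfrak A$ equals $\big(\Phi(y)\bu\Psi(y^{-1}x)\big)\delta_x$, and summing over $y\in\Xi^{\r(x)}$ reproduces $(\Phi*\Psi)(x)\delta_x=\varphi(\Phi*\Psi)(x)$ by \eqref{tiplication}. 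Likewise $\varphi(\Phi)^*(x)=\big(\Phi(x^{-1})\delta_{x^{-1}}\big)^*=\Phi(x^{-1})^\bu\delta_x=\Phi^*(x)\delta_x=\varphi(\Phi^*)(x)$ by \eqref{tion}, so $\varphi$ respects both operations.

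The crux is the isometry, and it reduces to showing that for each $x$ the assignment $a\mapsto a\delta_x$ is isometric from $\mathfrak C_x$ into $\mathfrak A$: then $\norm{\varphi(\Phi)(x)}_{\mathfrak A}=\norm{\Phi(x)}_{\mathfrak C_x}$ and the Hahn norms \eqref{parts} and \eqref{trinaca} coincide term by term. The bound $\norm{a\delta_x}_{\mathfrak A}\le\norm{a}_{\mathfrak C_x}$ is immediate, since $\norm{a\delta_x}_{\ell^{\infty,1}(\Xi\mid\mathscr C)}=\norm{a}_{\mathfrak C_x}$ and the enveloping norm is dominated by the Hahn norm. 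For the reverse inequality I would invoke the $C^*$-axioms of the Fell bundle together with the multiplicativity already established:
$$\norm{a\delta_x}_{\mathfrak A}^2=\norm{(a\delta_x)^*(a\delta_x)}_{\mathfrak A}=\norm{(a^\bu a)\delta_{\d(x)}}_{\mathfrak A},$$
reducing matters to a unit $u=\d(x)$ and the element $a^\bu a$ of the $C^*$-algebra $\mathfrak C_u$. The remaining point---that the unit fibre $\mathfrak C_u$ embeds isometrically into $C^*(\Xi\mid\mathscr C)$---is the main obstacle; I expect to obtain it from standard Fell-bundle theory, by passing to the reduced algebra and using the faithful conditional expectation onto the restriction of $\mathscr C$ to the unit space (equivalently, a faithful representation of $\mathfrak C_u$ induced up to $\Xi$), which is contractive and sends $b\delta_u$ to $b\in\mathfrak C_u$, giving $\norm{b\delta_u}_{\mathfrak A}\ge\norm{b}_{\mathfrak C_u}$ and hence $\norm{a\delta_x}_{\mathfrak A}=\norm{a}_{\mathfrak C_x}$.

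Finally, having an isometric $^*$-homomorphism on the dense subalgebra of finitely-supported sections whose image lies in $\ell^{\infty,1}(\Xi,\mathfrak A)$, I would extend $\varphi$ by continuity to all of $\ell^{\infty,1}(\Xi\mid\mathscr C)$. The extension remains an isometric $^*$-homomorphism, and being isometric it is injective, yielding the desired $^*$-monomorphism \eqref{eq}.
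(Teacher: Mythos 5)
Your proposal is correct and follows essentially the same route as the paper: the delta-section embedding $a\mapsto a\delta_x$ of each fibre $\mathfrak C_x$ into $\mathfrak A=C^*(\Xi\mid\mathscr C)$, the same $^*$-homomorphism computations, and the same $C^*$-identity trick reducing the isometry to the unit fibres. The only divergence is at that last step, where the paper simply observes that $\mathfrak C_u\ni a\mapsto a\delta_u\in\mathfrak A$ is an injective $^*$-morphism between $C^*$-algebras and hence automatically isometric, whereas you invoke the reduced algebra and its faithful conditional expectation --- a correct but heavier justification of the same point.
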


\begin{proof}
Set $\mathfrak A:=C^*(\Xi\,\vert\,\mathscr C)$ and, for every $x\in \Xi$ we embed $\mathfrak C_x$ into $\ell^{\infty,1}(\Xi\,\vert\,\mathscr C)\subset\mathfrak A$, by setting for each $a\in\mathfrak C_x$
$$
(\te_x a)(y):=a\ \ {\rm if}\ \ y=x\,,\quad(\te_x a)(y):=0_{\mathfrak C_x}\ \ {\rm if}\ \ y\ne x\,.
$$
It is not hard to prove that if $(x,y)\in\Xi^{(2)}$, then $$\te_xa*\te_yb=\te_{xy}(a\bu b)\quad\textup{ and }\quad (\te_xa)^*=\te_{x^{-1}}a^\bu$$ hold. On the other hand, one also has $\norm{\te_xa}_{\mathfrak A}=\norm{a}_{\mathfrak C_x}$: If $x\in \U$, this equality holds because $\te_x:\mathfrak C_x\to \mathfrak A$ is a $^*$-monomorphism of $C^*$-algebras, hence isometric. If $x\in\Xi$ is not an unit, then one may apply the -now standard- trick 
$$
\norm{\te_x a}^2_{\mathfrak A}=\norm{(\te_x a)^**(\te_x a)}_{\mathfrak A}=\norm{\te_{x^{-1}x}(a^\bu\bu a)}_{\mathfrak A}=\norm{a^\bu\bu a}_{\mathfrak C_{x^{-1}x}}=\norm{a}^2_{\mathfrak C_{x}}
$$ 
to conclude that $\te_x$ preserves the mentioned norms. This allows us to successfully define
$$
\varphi(\Phi)(x)=\te_x \Phi(x), \textup{  for }\Phi\in \ell^{\infty,1}(\Xi\,\vert\,\mathscr C)
$$
and have an isometry:
\begin{align*}
    \norm{\varphi(\Phi)}_{\ell^{\infty,1}(\Xi,{\mathfrak A})}&=\max\Big\{\sup_{u\in \U}\sum_{{\rm r}(x)=u}\!\p\!\te_x \Phi(x)\!\p_{\mathfrak A}\,,\,\sup_{u\in \U}\sum_{\d(x)=u}\!\p\!\te_x \Phi(x)\!\p_{\mathfrak A}\!\Big\} \\
    &=\max\Big\{\sup_{u\in \U}\sum_{{\rm r}(x)=u}\!\p\! \Phi(x)\!\p_{\mathfrak C_x}\,,\,\sup_{u\in \U}\sum_{\d(x)=u}\!\p\!\Phi(x)\!\p_{\mathfrak C_x}\!\Big\} \\
    &=\norm{\Phi}_{\ell^{\infty,1}(\Xi\,\vert\,\mathscr C)}.
\end{align*} Now we check that $\varphi$ is an $^*$-homomorphism, \begin{align*}
    \big[\varphi(\Phi)*\varphi(\Psi)\big](x)&=\sum_{yz=x} \te_y \Phi(y)*\te_z\Psi(z) \\
    &=\te_x \sum_{yz=x} \Phi(y)\bu\Psi(z) \\
    &=\varphi(\Phi*\Psi)(x)
\end{align*} and 
$$
    \varphi(\Phi^*)(x)=\te_x \Phi^*(x)=\te_x \Phi(x^{-1})^\bu=\big[\te_{x^{-1}}\Phi(x^{-1})\big]^*=\varphi(\Phi)(x^{-1})^*=\varphi(\Phi)^*(x).
$$ This finishes the proof.\end{proof}

\begin{rem}\label{tensorprodukt}
Observe that $\ell^{\infty,1}(\Xi,\mathfrak A)\cong \ell^{\infty,1}(\Xi)\,\hat{\otimes}\,\mathfrak A$, where $\hat{\otimes}$ denotes the projective tensor product. Indeed, given $(\varphi,a)\in \ell^{\infty,1}(\Xi)\times\mathfrak A$, define the function $\varphi\otimes a$ by $$\big[\varphi\otimes a\big](x):=a\varphi(x), \quad\forall x\in\Xi.$$ The map $(\varphi,a)\mapsto \varphi\otimes a$ defined in $\ell^{\infty,1}(\Xi)\times\mathfrak A\to \ell^{\infty,1}(\Xi,\mathfrak A)$ is bilinear, has norm $1$ (it satisfies $\norm{\varphi\otimes a}_{\ell^{\infty,1}(\Xi,\mathfrak A)}\leq \norm{a}_{\mathfrak A}\norm{\varphi}_{\ell^{\infty,1}(\Xi)}$) and it extends to $^*$-isomorphism. 
$$
\iota: \ell^{\infty,1}(\Xi)\,\hat{\otimes}\,\mathfrak A\to \ell^{\infty,1}(\Xi,\mathfrak A)
$$ 
\end{rem}

The following corollary effectively reduces our concerns to the study of tensor products, just as in the group case (cf. \cite[Theorem 2.4]{JM}).

\begin{cor}\label{usecor}
A discrete groupoid $\Xi$ is hypersymmetric if and only if the Banach $^*$-algebra $\ell^{\infty,1}(\Xi,\mathfrak A)\cong \ell^{\infty,1}(\Xi)\,\hat{\otimes}\,\mathfrak A$ is symmetric, for every $C^*$-algebra $\mathfrak A$.
\end{cor}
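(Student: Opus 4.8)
The plan is to prove the two implications by chaining the two structural facts already in place: Lemma \ref{embeddinglemma}, which realizes $\ell^{\infty,1}(\Xi\!\mid\!\mathscr C)$ isometrically inside a constant-fiber algebra, and the observation built into Definition \ref{aktionbundle} that $\ell^{\infty,1}(\Xi,\mathfrak A)$ is \emph{itself} the Hahn algebra of an honest Fell bundle over $\Xi$, namely $\mathscr C_{\mathcal T}$ for the trivial action $\mathcal T$ on the constant $C^*$-bundle $\mathfrak A_u\equiv\mathfrak A$. The only genuinely analytic ingredient will be the permanence of symmetry under passage to closed $^*$-subalgebras.

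For the implication $(\Rightarrow)$ I would argue straight from the definition. Fix a $C^*$-algebra $\mathfrak A$ and take $\mathcal T$ to be the trivial action of $\Xi$ on the constant bundle with fiber $\mathfrak A$; axioms (a)--(d) of Definition \ref{groupoidaktion} hold trivially, so $\mathscr C_{\mathcal T}$ is a legitimate Fell bundle and $\ell^{\infty,1}(\Xi,\mathfrak A)=\ell^{\infty,1}(\Xi\!\mid\!\mathscr C_{\mathcal T})$. If $\Xi$ is hypersymmetric, then Definition \ref{groupsymmetric}(ii) applied to this particular $\mathscr C_{\mathcal T}$ says exactly that $\ell^{\infty,1}(\Xi,\mathfrak A)$ is symmetric. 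As $\mathfrak A$ was arbitrary, this half is complete, and it uses nothing beyond unwinding definitions.

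For the implication $(\Leftarrow)$, suppose $\ell^{\infty,1}(\Xi,\mathfrak A)$ is symmetric for every $C^*$-algebra $\mathfrak A$, and let $\mathscr C$ be an arbitrary Fell bundle over $\Xi$. I would set $\mathfrak A:=C^*(\Xi\,\vert\,\mathscr C)$ and invoke Lemma \ref{embeddinglemma} to identify $\ell^{\infty,1}(\Xi\!\mid\!\mathscr C)$ with a $^*$-subalgebra of $\ell^{\infty,1}(\Xi,\mathfrak A)$; since the embedding is isometric and the domain is complete, the image is closed. By hypothesis the ambient algebra is symmetric, so everything reduces to transferring symmetry down to this closed $^*$-subalgebra. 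Granting that step, $\ell^{\infty,1}(\Xi\!\mid\!\mathscr C)$ is symmetric for every $\mathscr C$, which is precisely hypersymmetry.

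The main obstacle is this last transfer, which is not automatic for general Banach $^*$-algebras, so I would spell it out using the self-adjoint characterization recorded in Definition \ref{symmetric}: it suffices to show that every self-adjoint $h$ in a closed $^*$-subalgebra $\mathfrak B\subseteq\mathfrak D$ of a symmetric algebra $\mathfrak D$ has real spectrum. For a closed subalgebra one always has $\sigma_{\mathfrak D}(h)\subseteq\sigma_{\mathfrak B}(h)$ together with $\partial\,\sigma_{\mathfrak B}(h)\subseteq\sigma_{\mathfrak D}(h)$, so $\sigma_{\mathfrak B}(h)$ differs from $\sigma_{\mathfrak D}(h)$ only by filling in bounded components of the complement. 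Symmetry of $\mathfrak D$ gives $\sigma_{\mathfrak D}(h)\subseteq\mathbb{R}$, whence $\mathbb{C}\setminus\sigma_{\mathfrak D}(h)$ is connected and has no bounded components to fill; therefore $\sigma_{\mathfrak B}(h)=\sigma_{\mathfrak D}(h)\subseteq\mathbb{R}$. Applying this with $h=b^*b$ yields positivity of $\sigma_{\mathfrak B}(b^*b)$ for every $b\in\mathfrak B$, which is the symmetry of $\mathfrak B$. When the algebras are non-unital one runs the same reasoning in their unitizations, which does not affect the connectedness argument.
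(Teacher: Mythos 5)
Your proof is correct and follows essentially the same route as the paper: the forward direction realizes $\ell^{\infty,1}(\Xi,\mathfrak A)$ as the Hahn algebra of the trivial-action Fell bundle from Definition \ref{aktionbundle}, and the converse combines Lemma \ref{embeddinglemma} with the fact that symmetry passes to closed $^*$-subalgebras. The only difference is that the paper cites this last permanence fact from Palmer \cite[Theorem 11.4.2]{Pa2}, whereas you prove it directly via the standard boundary-of-spectrum argument ($\partial\sigma_{\mathfrak B}(h)\subseteq\sigma_{\mathfrak D}(h)$ plus connectedness of the complement of a compact subset of $\mathbb{R}$), which is a valid, self-contained substitute.
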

\begin{proof}
Every algebra of the form $\ell^{\infty,1}(\Xi,\mathfrak A)$ comes from a Fell bundle (recall Definition \ref{aktionbundle}), so it is symmetric if $\Xi$ is hypersymmetric. On the other hand, because of Lemma \ref{embeddinglemma}, given any Fell bundle $\mathscr C$, $\ell^{\infty,1}(\Xi\,\vert\,\mathscr C)$ may be identified as a closed $^*$-subalgebra of some algebra of the form $\ell^{\infty,1}(\Xi,\mathfrak A)$. So it will be symmetric if the latter is symmetric, by \cite[Theorem 11.4.2]{Pa2}.
\end{proof}

\begin{rem}\label{rigid}
In the group case, this condition has been called 'rigid symmetry' by many authors (including myself) and it was introduced by Leptin and Poguntke in \cite{LP}. It can also be seen in \cite{AO,FJM,JM,Ma,Po}.
\end{rem}

Before going into the following sections, let us simplify some notation. Let $\mathfrak A,\mathfrak B$ be Banach $^*$-algebras. We will denote by $\mathfrak A\hookrightarrow\mathfrak B$ the fact that there exists an isometric $^*$-monomorphism $\iota:\mathfrak A\to\mathfrak B$. In this language, the conclusion of Lemma \ref{embeddinglemma} can be written as $\ell^{\infty,1}(\Xi\,\vert\,\mathscr C)\hookrightarrow \ell^{\infty,1}\big(\Xi,C^*(\Xi\,\vert\,\mathscr C)\big)$. On the other hand, the dual notation $\mathfrak A\twoheadrightarrow\mathfrak B$ means that there exists some contractive $^*$-epimorphism $\pi:\mathfrak A\to\mathfrak B$.

\section{The result for discrete groupoids}\label{rigsym}

The rest of the paper is devoted to prove Theorem \ref{full}. We will follow the strategy detailed in the introduction, starting with a complete (well-known) characterization of discrete transitive groupoids as group transformation groupoids.

\begin{defn}\label{starlet}
Let $\gamma$ be a continuous action of the (discrete) group $\G$ on the topological space $X$, we define {\it  the transformation groupoid} $\Xi:=\G\ltimes_\gamma\!X$ associated to it as follows. As a topological space $\G\ltimes_\gamma\!X$, it is just $\G\times X$, the maps $\r,\d$ are given by $\d(a,x)=x$ and $\r(a,x)=\gamma_a(x)$. The composition is $\big(b,\gamma_a(x)\big)(a,x):=(ba,x)$ and inversion reads $(a,x)^{-1}:=\big(a^{-1}\!,\gamma_a(x)\big)$\,. The unit space is $\U=\{{\e}\}\times X\equiv X$.

\end{defn}

\begin{prop}\label{lema}
Let $\Xi$ be a discrete transitive groupoid with isotropy group $\G$. There is an abelian group structure on $\U$ and an action $\gamma$ of $\G'=\G\times \U$ on $\U$ such that $\Xi\cong \G'\ltimes_\gamma \U$.
\end{prop}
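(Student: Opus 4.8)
The plan is to prove this in two moves: first coordinatize $\Xi$ as the \emph{trivial} transitive groupoid on $\G\times\U\times\U$, and then observe that this model is itself a transformation groupoid for the group $\G'=\G\times\U$. This is the well-known structure theorem for transitive groupoids, repackaged so that the coordinatized version literally matches Definition \ref{starlet}.

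First I would fix a base unit $u_0\in\U$ and set $\G:=\Xi_{u_0}^{u_0}$. By transitivity, for each $u\in\U$ I choose an arrow $\xi_u\in\Xi$ with $\d(\xi_u)=u_0$ and $\r(\xi_u)=u$, normalised so that $\xi_{u_0}=u_0$ (the unit at $u_0$). Given $\xi\in\Xi$ with $\r(\xi)=u$ and $\d(\xi)=v$, the element $g:=\xi_u^{-1}\xi\,\xi_v$ is composable and lies in $\G$, so $\xi\mapsto(g,u,v)$ is a bijection of $\Xi$ onto $\mathsf{T}:=\G\times\U\times\U$, with inverse $(g,u,v)\mapsto\xi_u\,g\,\xi_v^{-1}$. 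A direct check against the conventions $\Xi^{(2)}=\{(x,y)\mid\r(y)=\d(x)\}$ shows this is a groupoid isomorphism onto $\mathsf{T}$ equipped with $\r(g,u,v)=u$, $\d(g,u,v)=v$, product $(g,u,v)(h,v,w)=(gh,u,w)$, inversion $(g,u,v)^{-1}=(g^{-1},v,u)$, and units $(\e,u,u)$.

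Next I would put an abelian group structure on the set $\U$; this is always possible, since any nonempty set carries an abelian group of the same cardinality (take $\Z/n\Z$ when $|\U|=n$, and a direct sum of copies of $\Z/2\Z$ when $\U$ is infinite). Writing this operation multiplicatively with identity $\e_\U$, I set $\G':=\G\times\U$ (direct product of groups) and define the action $\gamma$ of $\G'$ on $\U$ by $\gamma_{(g,t)}(x):=tx$: the $\G$-factor acts trivially and the $\U$-factor by left translation. Associativity in $\U$ makes this a genuine action, and its stabilisers are exactly $\G\times\{\e_\U\}\cong\G$, as expected for a transitive groupoid with isotropy $\G$.

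Finally I would identify $\mathsf{T}$ with $\G'\ltimes_\gamma\U$ via $\Psi\big((g,t),x\big):=(g,\,tx,\,x)$. This is a bijection, its inverse sending $(g,u,v)$ to $\big((g,uv^{-1}),v\big)$; it carries the units $\{(\e,\e_\U)\}\times\U$ to the units $(\e,x,x)$; and using $(g,t)(g',t')=(gg',tt')$ one checks it respects composition and inversion. I expect the only real work to be bookkeeping: reconciling the composition order of Definition \ref{starlet} with the groupoid convention, and noting that the homomorphism property forces a map of the shape $((g,t),x)\mapsto(\phi(g,t,x),tx,x)$ whose cocycle identity $\phi(gg',tt',x)=\phi(g,t,t'x)\,\phi(g',t',x)$ is solved by the trivial choice $\phi\equiv g$. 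Composing the isomorphism $\Xi\cong\mathsf{T}$ of the first step with $\Psi^{-1}$ then yields $\Xi\cong\G'\ltimes_\gamma\U$, as claimed.
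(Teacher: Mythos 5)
Your proof is correct and takes essentially the same route as the paper: the composite of your two isomorphisms is precisely the paper's map $\varphi\big((g,w),v\big)=z_{w+v}^{-1}\,g\,z_v$ (your $\xi_v$ is the paper's $z_v^{-1}$), built from the same ingredients --- connecting arrows supplied by transitivity, an arbitrary abelian group structure on $\U$, and the translation action of $\G'=\G\times\U$. The only difference is organizational: you factor the isomorphism through the intermediate model $\G\times\U\times\U$ (group times pair groupoid), whereas the paper writes down the composite map in one step and verifies it directly.
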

\begin{proof}
Let us give $\U$ some abelian group structure with additive notation and define the action of $\G'$ by $\gamma_{(\alpha,w)}(v)=w+v$. In order to construct an isomorphism $\varphi$, fix some $u\in\U$ and realize $\G$ as $\Xi_u^u$. Since $\Xi$ is transitive, for every $v\in \U$, there exists an arrow $z_v\in\Xi$ such that $\d(z_v)=v$ and $\r(z_v)=u$. Then $$\varphi: \G'\ltimes_\gamma \U\to \Xi \textup{ defined by }\varphi\big((x,w),v\big)=z^{-1}_{w+v}x z_{v}, \textup{ is the required isomorphism.}$$
Let us verify that $\varphi$ is indeed a groupoid isomorphism: \begin{itemize}
    \item[(i)] If $\big((x_1,w_1),w_2+v\big)\big((x_2,w_2),v\big)=\big((x_1x_2,w_1+w_2),v\big)$, then 
    \begin{align*}
    \varphi\big((x_1x_2,w_1+w_2),v\big)&=z^{-1}_{w_1+w_2+v}x_1x_2z_{v} \\
    &=z^{-1}_{w_1+w_2+v}x_1z_{w_2+v}z_{w_2+v}^{-1}x_2z_{v} \\
    &=\varphi\big((x_1,w_1),w_2+v\big)\varphi\big((x_2,w_2),v\big)
\end{align*}
    \item[(ii)] If $\big((x,w),v\big)\in \G'\ltimes_\gamma \U$, then $\big((x,w),v\big)^{-1}=\big((x^{-1},-w),w+v\big)$ and
    \begin{align*}
    \varphi\big((x^{-1},-w),w+v\big)&=z^{-1}_v x^{-1}z_{w+v} \\
    &=(z^{-1}_{w+v}x z_{v})^{-1}=\varphi\big((x,w),v\big)^{-1}.
\end{align*}
    \item[(iii)] The inverse function $\varphi^{-1}$ is given by $\varphi^{-1}(\xi)=\big((z_{\r(\xi)}\xi z_{\d(\xi)}^{-1},\r(\xi)-\d(\xi)),\d(\xi)\big)$.
\end{itemize}

\end{proof}

\begin{rem}
It follows from Proposition \ref{lema} that the pair groupoid $\U\times \U$ is isomorphic to $\U\ltimes_{\gamma|_\U} \U$. (Here $\G=\{\e\}$ is the trivial group)
\end{rem}

While the commutativity of $\U$ was not essential for the previous proof -any group structure would have worked out-, it will be of vital importance in the future (cf. the proof of Theorem \ref{trans}). That is why it will be occasionally remarked in the following propositions.

The following lemma requires some notation. Given a $C^*$-algebra $\mathfrak A$ and a group action $\gamma:\G\to {\rm Bij}(\U)$, denote by $\Gamma$ the $\G$-action on $\mathcal C_0(\U,\mathfrak A)$ satisfying $\Gamma_x(f)(u)=f\big(\gamma_{x^{-1}}(u)\big)$. 

\begin{lem}\label{reduction}
Suppose that the discrete group $\G$ acts on $\U$ via $\gamma$ and $\mathfrak A$ is a $C^*$-algebra. Then
\begin{equation}\label{eq1}
    \ell^1_\Gamma\big(\G,\mathcal C_0(\U,\mathfrak A)\big)\twoheadrightarrow \ell^{\infty,  1}(\G\ltimes_\gamma \U,\mathfrak A).
\end{equation}
\end{lem}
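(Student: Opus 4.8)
The plan is to build an explicit contractive $^*$-homomorphism with dense range out of the tautological identification of $\mathfrak A$-valued functions on $\G\times\U=\G\ltimes_\gamma\U$, twisted by $\gamma$ so that the crossed-product law and the groupoid law line up. Given $F\in\ell^1_\Gamma\big(\G,\mathcal C_0(\U,\mathfrak A)\big)$ I would define a section $\pi(F)$ of the constant-fibre bundle over $\G\ltimes_\gamma\U$ by
$$\pi(F)(a,x):=F(a)\big(\r(a,x)\big)=F(a)\big(\gamma_a(x)\big),\qquad (a,x)\in\G\ltimes_\gamma\U.$$
Since $\pi(F)(a,x)\in\mathfrak A=\mathfrak A_{\r(a,x)}$, this is a legitimate element of the target Hahn algebra once the norm bound below is in hand, and the whole task splits into three routine verifications — that $\pi$ is a $^*$-morphism, that it is contractive, that it has dense range — together with one conceptual point.

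That $\pi$ is multiplicative is a direct computation from \eqref{unaca}: writing $\xi=(c,x)$ and parametrising $\Xi^{\r(\xi)}$ as $\{(b,\gamma_{b^{-1}c}(x)):b\in\G\}$ (the action on the constant-fibre bundle being trivial, so that $\mathcal T_y=\mathrm{id}$), one obtains, using the operations of Definition \ref{starlet},
$$[\pi(F)*\pi(G)](c,x)=\sum_{b\in\G}F(b)\big(\gamma_c(x)\big)\,G(b^{-1}c)\big(\gamma_{b^{-1}c}(x)\big),$$
while evaluating $(F*G)(c)=\sum_{b}F(b)\,\Gamma_b\big(G(b^{-1}c)\big)$ at $\gamma_c(x)$ produces the same sum, because $\Gamma_b\big(G(b^{-1}c)\big)(\gamma_c(x))=G(b^{-1}c)\big(\gamma_{b^{-1}c}(x)\big)$ by the definition of $\Gamma$. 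Hence $\pi(F)*\pi(G)=\pi(F*G)$. The $^*$-compatibility is equally short: from \eqref{binaca} and $(c,x)^{-1}=(c^{-1},\gamma_c(x))$, both $\pi(F)^*(c,x)$ and $\pi(F^*)(c,x)$ collapse to $\big[F(c^{-1})(x)\big]^*$.

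For contractivity I would use that each $\gamma_a$ is a bijection of $\U$. Over a $\d$-fibre $\{(a,u):a\in\G\}$ one has $\sum_{a}\norm{\pi(F)(a,u)}_{\mathfrak A}=\sum_{a}\norm{F(a)(\gamma_a(u))}_{\mathfrak A}\le\sum_{a}\sup_{v\in\U}\norm{F(a)(v)}_{\mathfrak A}=\norm{F}_1$, while over an $\r$-fibre $\{(a,\gamma_{a^{-1}}(u)):a\in\G\}$ the bijectivity gives $\norm{\pi(F)(a,\gamma_{a^{-1}}(u))}_{\mathfrak A}=\norm{F(a)(u)}_{\mathfrak A}$, so again the sum is $\le\norm{F}_1$; by \eqref{trinaca} this yields $\norm{\pi(F)}_{\infty,1}\le\norm{F}_1$. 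For dense range, the finitely supported $F$ (finite support in $\G$, each $F(a)$ finitely supported on the discrete set $\U$) are dense in $\ell^1_\Gamma\big(\G,\mathcal C_0(\U,\mathfrak A)\big)$, and $\pi$ carries them exactly onto the finitely supported sections of $\ell^{\infty,1}(\G\ltimes_\gamma\U,\mathfrak A)$, which are dense there by construction of the Hahn algebra.

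The one point that is not purely mechanical — and the reason I state the conclusion as dense range rather than literal surjectivity — is the mismatch between the two norms: $\ell^1_\Gamma$ carries the stronger norm $\norm{F}_1=\sum_{a}\sup_{u}\norm{F(a)(u)}_{\mathfrak A}$, whereas the Hahn norm only controls $\sup_{u}\sum_{a}$. Consequently $\pi$ is in general not onto; for $\G=\U=\Z$ with $\gamma$ the translation action and $\mathfrak A=\mathbb{C}$, the section $\Phi(a,x)=|a|^{-1}\delta_{x,2a}$ (for $a\ne 0$) lies in $\ell^{\infty,1}(\Z\ltimes_\gamma\Z,\mathbb{C})$ yet its $\pi$-preimage has infinite $\ell^1$-norm. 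This is exactly what the symbol $\twoheadrightarrow$ accommodates: a contractive $^*$-epimorphism in the Banach $^*$-algebra category is a contractive $^*$-homomorphism with dense range, which is what $\pi$ delivers and what is needed to transport symmetry from $\ell^1_\Gamma\big(\G,\mathcal C_0(\U,\mathfrak A)\big)$ down to $\ell^{\infty,1}(\G\ltimes_\gamma\U,\mathfrak A)$.
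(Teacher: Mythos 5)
Your construction coincides with the paper's: the paper defines exactly the same map $\pi(F)(x,u)=F(x)\big(\gamma_x(u)\big)$, and your verifications of multiplicativity, $^*$-compatibility and contractivity are the same computations that appear there. The divergence is the surjectivity claim. The paper asserts that $\pi$ is ``clearly surjective''; you deny this and produce a counterexample. Your counterexample is correct: for $\G=\U=\Z$ with the translation action, the section $\Phi(a,x)=|a|^{-1}\delta_{x,2a}$ (for $a\neq 0$) is a Hahn-norm limit of its truncations to $|a|\le N$ (the tail has both fiber-sums bounded by $(N+1)^{-1}$, since a given unit $u$ meets the support of the tail in at most one point of each fiber), so it lies in $\ell^{\infty,1}(\Z\ltimes_\gamma\Z)$; yet its only possible preimage, $F(a)=|a|^{-1}\delta_{3a}$, satisfies $\sum_{a\neq 0}\sup_v|F(a)(v)|=\sum_{a\neq 0}|a|^{-1}=\infty$, hence is not in $\ell^1_\Gamma\big(\Z,\mathcal C_0(\Z)\big)$. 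So rather than taking a different route, you have exposed an error in the paper's own proof: the displayed map is injective with dense range, but it is not onto.

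The genuine gap is in your final paragraph, where you declare that dense range is ``what is needed to transport symmetry.'' The paper defines $\mathfrak A\twoheadrightarrow\mathfrak B$ as the existence of a contractive $^*$-\emph{epimorphism}, i.e.\ a surjective map, and surjectivity is precisely what Corollary \ref{redstat} uses: it invokes \cite[Theorem 11.4.2]{Pa2}, whose relevant permanence properties are for closed $^*$-subalgebras and for quotients (equivalently, continuous surjective images, via the open mapping theorem). A contractive $^*$-homomorphism with dense range is strictly weaker, and no cited result covers it. Nor does the obvious argument work: if $\Phi=\Phi^*$ lies in the Hahn algebra, you can approximate it by self-adjoint elements $\pi(F_n)$ whose spectra are real (because ${\rm Spec}\big(\pi(F_n)\big)\subseteq{\rm Spec}(F_n)\cup\{0\}$ and the domain is symmetric), but spectra can expand under norm limits, so real spectra of the approximants do not yield real spectrum of $\Phi$; to pass to the limit one would need a uniform bound on the inverses $\big(1+\pi(F_n)^*\pi(F_n)\big)^{-1}$, which a general Banach $^*$-algebra does not provide. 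So your proposal proves a weaker statement than the lemma as the paper defines and uses it, and the assertion that this weaker statement suffices for the symmetry transfer is exactly the point that would need a proof and does not receive one.

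The honest conclusion is therefore twofold: your counterexample shows that the lemma cannot be established, via this map $\pi$, in the surjective sense the paper requires (an issue that should be reported against the paper itself); and your dense-range substitute does not close the hole, because the downstream use in Corollary \ref{redstat} collapses without surjectivity. What is actually missing, both in your proposal and in the paper, is a direct argument that symmetry of $\ell^1_\Gamma\big(\G,\mathcal C_0(\U,\mathfrak A)\big)$ forces symmetry of $\ell^{\infty,1}(\G\ltimes_\gamma\U,\mathfrak A)$, using the specific structure of $\pi$ (for instance, uniform spectral control on the dense image) rather than a general permanence property.
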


\begin{proof}
Define $\pi:\ell^1_\Gamma\big(\G,\mathcal C_0(\U,\mathfrak A)\big)\to \ell^{\infty,  1}(\G\ltimes_\gamma \U,\mathfrak A)$ by the formula $\pi(\Phi)(x,u)=\Phi(x)\big(\gamma_{x}(u)\big)$. This map is well-defined and clearly surjective, while it also satisfies
\begin{align*}
    \norm{\pi(\Phi)}_{\ell^{\infty,  1}(\G\ltimes_\gamma \U,\mathfrak A)}&=\max\Big\{\sup_{u\in \U}\sum_{x\in \G}\!\p\! \Phi(x)\big(\gamma_{x}(u)\big)\!\p_{\mathfrak A}\,,\,\sup_{u\in \U}\sum_{x\in\G}\!\p\! \Phi(x)(u)\!\p_{\mathfrak A}\!\Big\} \\ 
    &\leq \sum_{x\in\G}\sup_{u\in \U}\!\p\! \Phi(x)(u)\!\p_{\mathfrak A}=\sum_{x\in\G}\norm{\Phi(x)}_{\mathcal C_0(\U,\mathfrak A)}=\norm{\Phi}_{\ell^1_\Gamma(\G,\mathcal C_0(\U,\mathfrak A))}
\end{align*} and \begin{align*}
    \pi(\Phi*\Psi)(x,u)&=\big[\Phi*\Psi\big](x)\big(\gamma_{x}(u)\big) \\
    &=\Big[\sum_{y\in \G}\Phi(y)\Gamma_y\big[\Psi(y^{-1}x)\big]\Big]\big(\gamma_{x}(u)\big) \\
    &=\sum_{y\in \G}\Phi(y)\big(\gamma_{x}(u)\big)\Psi(y^{-1}x)\big(\gamma_{y^{-1}x}(u)\big) \\
    &=\sum_{y\in \G}\pi(\Phi)(y,\gamma_{y^{-1}x}(u))\pi(\Psi)(y^{-1}x,u) \\
    &=\big[\pi(\Phi)*\pi(\Psi)\big](x,u).
\end{align*} Finally, we see that \begin{align*}
    \pi(\Phi^*)(x,u)&=\Phi^*(x)\big(\gamma_{x}(u)\big) \\
    &=\Gamma_x\big[\Phi(x^{-1})^*]\big(\gamma_{x}(u)\big)=\Phi(x^{-1})(u)^*=\big[\pi(\Phi)(x^{-1},\gamma_{x}(u))\big]^*=\pi(\Phi)^*(x,u).
\end{align*} Hence $\pi$ is a contractive $^*$-epimorphism.
\end{proof}

Lemma \ref{reduction} will be used in the following form, which allows us to focus on the study of $\ell^1$-algebras arising from group $C^*$-dynamical systems.

\begin{cor}\label{redstat}
If $\ell^1_\Gamma\big(\G,\mathcal C_0(\U,\mathfrak A)\big)$ is a symmetric Banach $^*$-algebra, then $\ell^{\infty,  1}(\G\ltimes_\gamma \U,\mathfrak A)$ is also symmetric.
\end{cor}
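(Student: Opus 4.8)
The plan is to obtain this at once from Lemma \ref{reduction} together with the classical fact that symmetry descends along $^*$-epimorphisms. Lemma \ref{reduction} supplies a contractive $^*$-epimorphism
$$
\pi:\ell^1_\Gamma\big(\G,\mathcal C_0(\U,\mathfrak A)\big)\to \ell^{\infty,1}(\G\ltimes_\gamma \U,\mathfrak A),
$$
so everything reduces to the abstract assertion that a surjective $^*$-homomorphic image of a symmetric Banach $^*$-algebra is again symmetric.

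To prove that assertion I would argue directly from Definition \ref{symmetric}. Fix any $b$ in the codomain and, using surjectivity of $\pi$, choose $a$ in the domain with $\pi(a)=b$, so that $\pi(a^*a)=b^*b$. Extending $\pi$ to a unital $^*$-homomorphism between the corresponding unitizations, the elementary spectral containment $\sigma\big(\pi(c)\big)\subseteq\sigma(c)$ -- valid for unital homomorphisms because they carry invertible elements to invertible elements -- yields
$$
\sigma(b^*b)=\sigma\big(\pi(a^*a)\big)\subseteq\sigma(a^*a).
$$
Since the domain is symmetric, $\sigma(a^*a)$ is contained in $[0,\infty)$, hence so is $\sigma(b^*b)$. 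As $b$ is arbitrary, the codomain is symmetric.

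The computation is essentially formal, and I expect no real obstacle. The only point requiring a little care is that, for the (possibly non-unital) algebras involved, the spectrum must be computed in the unitizations; one should therefore check that $\pi$ extends to a unital $^*$-homomorphism at that level before applying the spectral containment. This hereditary behaviour of symmetry under $^*$-epimorphisms is standard and belongs to the same circle of results as \cite[Theorem 11.4.2]{Pa2}, already invoked for closed $^*$-subalgebras in Corollary \ref{usecor}.
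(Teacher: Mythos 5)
Your proposal is correct and takes essentially the same route as the paper: the paper likewise deduces the corollary immediately from the $^*$-epimorphism of Lemma \ref{reduction} together with the fact that symmetry passes to $^*$-epimorphic images, citing \cite[Theorem 11.4.2]{Pa2} for that hereditary property. Your only deviation is that you prove this standard fact inline (correctly, via the unital extension to the unitizations and the spectral containment $\sigma\big(\pi(c)\big)\subseteq\sigma(c)$) rather than invoking the reference.
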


\begin{proof}
In Lemma \ref{reduction}, we showed that $\ell^1_\Gamma\big(\G,\mathcal C_0(\U,\mathfrak A)\big)\twoheadrightarrow \ell^{\infty,  1}(\G\ltimes_\gamma \U,\mathfrak A)$, so the conclusion follows from \cite[Theorem 11.4.2]{Pa2}.
\end{proof}

\begin{prop}\label{separation}
Let $(\G\times\H, \Gamma, \mathfrak A)$ be a $C^*$-dynamical system and assume that $\G$ acts trivially on $\mathfrak A$. Then one has
\begin{equation}\label{eq2}
    \ell^1_\Gamma(\G',\mathfrak A)\cong \ell^1(\G)\,\hat{\otimes}\,\ell^1_{\Gamma}(\H,\mathfrak A),
\end{equation}
where $\G':=\G\times\H$.
\end{prop}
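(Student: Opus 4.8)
The plan is to write down the obvious ``uncurrying'' identification and check that it respects every piece of structure. Set $\mathfrak B:=\ell^1_\Gamma(\H,\mathfrak A)$, where $\H\cong\{\e\}\times\H$ carries the restricted action $\Gamma_h:=\Gamma_{(\e,h)}$, and recall the standard isometric Banach-space isomorphism $\ell^1(\G)\,\hat{\otimes}\,\mathfrak B\cong\ell^1(\G,\mathfrak B)$ (the group analogue of Remark \ref{tensorprodukt}), under which the tensor product becomes the untwisted $\G$-convolution with values multiplied in $\mathfrak B$, and the involution becomes $F^*(g)=[F(g^{-1})]^{*_{\mathfrak B}}$. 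It therefore suffices to produce an isometric $^*$-isomorphism $\ell^1_\Gamma(\G',\mathfrak A)\cong\ell^1(\G,\mathfrak B)$. I would define it by sending $\Phi\in\ell^1_\Gamma(\G',\mathfrak A)$ to the function $F_\Phi\in\ell^1(\G,\mathfrak B)$ given by $F_\Phi(g)(h):=\Phi(g,h)$. This is visibly a linear bijection, and it is isometric because the $\ell^1(\G,\mathfrak B)$-norm of $F_\Phi$ equals $\sum_{g\in\G}\norm{F_\Phi(g)}_{\mathfrak B}=\sum_{g\in\G}\sum_{h\in\H}\norm{\Phi(g,h)}_{\mathfrak A}$, which is exactly $\norm{\Phi}_{\ell^1_\Gamma(\G',\mathfrak A)}$.

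The heart of the argument is matching the two products, and this is where the hypothesis that $\G$ acts trivially enters decisively. Since $\Gamma$ is a $\G'$-action with $\G$ acting trivially, one has $\Gamma_{(g',h')}=\Gamma_{(g',\e)}\Gamma_{(\e,h')}=\Gamma_{h'}$ for all $(g',h')\in\G'$; that is, the twist only sees the $\H$-coordinate. Expanding the product on $\ell^1(\G,\mathfrak B)$ and then unfolding the $\mathfrak B=\ell^1_\Gamma(\H,\mathfrak A)$-convolution via \eqref{unaca} gives
\begin{align*}
[F_{\Phi}*F_{\Psi}](g)(h)&=\sum_{g'\in\G}\big[F_\Phi(g')*F_\Psi(g'^{-1}g)\big](h)\\
&=\sum_{g'\in\G}\sum_{h'\in\H}\Phi(g',h')\,\Gamma_{h'}\big[\Psi(g'^{-1}g,h'^{-1}h)\big]\\
&=\sum_{(g',h')\in\G'}\Phi(g',h')\,\Gamma_{(g',h')}\big[\Psi\big((g',h')^{-1}(g,h)\big)\big],
\end{align*}
where the last equality uses $\Gamma_{h'}=\Gamma_{(g',h')}$ together with $(g',h')^{-1}(g,h)=(g'^{-1}g,h'^{-1}h)$. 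The right-hand side is precisely $[\Phi*\Psi](g,h)=F_{\Phi*\Psi}(g)(h)$, so $F_{\Phi*\Psi}=F_\Phi*F_\Psi$.

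For the involution a parallel computation does the job: starting from $F_\Phi^*(g)(h)=[F_\Phi(g^{-1})]^{*_{\mathfrak B}}(h)=\Gamma_h\big[\Phi(g^{-1},h^{-1})^*\big]$ and using triviality once more to rewrite $\Gamma_h=\Gamma_{(g,h)}$, one recognizes $\Phi^*(g,h)$ from \eqref{binaca}, so $F_{\Phi^*}=F_\Phi^*$. Combining the three verifications shows that $\Phi\mapsto F_\Phi$ is an isometric $^*$-isomorphism $\ell^1_\Gamma(\G',\mathfrak A)\cong\ell^1(\G,\mathfrak B)\cong\ell^1(\G)\,\hat{\otimes}\,\ell^1_\Gamma(\H,\mathfrak A)$, which is the claim. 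I expect the only real obstacle to be bookkeeping: one must be careful that the product and involution transported from $\ell^1(\G)\,\hat{\otimes}\,\mathfrak B$ to $\ell^1(\G,\mathfrak B)$ are exactly the \emph{untwisted} $\G$-convolution and the fiberwise $\mathfrak B$-operations, since it is precisely this untwisted structure in the $\G$-variable --- guaranteed by the triviality of the $\G$-action --- that allows $\ell^1(\G)$ to split off as a tensor factor.
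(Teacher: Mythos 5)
Your proof is correct and is essentially the same as the paper's: the author also defines the uncurrying map $\iota(\Phi)(x)(y)=\Phi(x,y)$ into $\ell^1\big(\G,\ell^1_\Gamma(\H,\mathfrak A)\big)$, verifies isometry by the same double-sum computation, matches the convolutions and involutions using (implicitly) the identity $\Gamma_{(a,b)}=\Gamma_b$ coming from the triviality of the $\G$-action, and then invokes $\ell^1(\G,\mathfrak B)\cong\ell^1(\G)\,\hat{\otimes}\,\mathfrak B$. If anything, you make explicit a point the paper leaves tacit, namely exactly where the hypothesis that $\G$ acts trivially is used.
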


\begin{proof}
Observe that $$\iota: \ell^1_\Gamma(\G',\mathfrak A)\to\ell^1\big(\G,\ell^1_{\Gamma}(\H,\mathfrak A)\big)\quad\textup{ defined by }\quad\iota(\Phi)(x)(y)=\Phi(x,y) 
$$ is an isometric $^*$-isomorphism. Indeed, bijectivity is direct, while
\begin{align*}
    \norm{\iota(\Phi)}_{\ell^1(\G,\ell^1_{\Gamma}(\H,\mathfrak A))}&=\sum_{x\in\G}\norm{\iota(\Phi)(x)}_{\ell^1_{\Gamma}(\H,\mathfrak A)}=\sum_{x\in\G}\sum_{y\in\H}\norm{\Phi(x,y)}_{\mathfrak A}=\norm{\Phi}_{\ell^1_\Gamma(\G',\mathfrak A)}
\end{align*} and, identifying $(1_\G,b)\in\G'$ with $b\in\H$, \begin{align*}
    \big[\iota(\Phi)*\iota(\Psi)\big](x)(y)&=\Big[ \sum_{a\in \G} \iota(\Phi)(a)*\iota(\Psi)(a^{-1}x)\Big](y) \\
    &=\sum_{a\in \G}\sum_{b\in \H} \iota(\Phi)(a)(b)\Gamma_{b}\big[\iota(\Psi)(a^{-1}x)(b^{-1}y)\big] \\
    &=\sum_{(a,b)\in \G\times\H}\Phi(a,b){\Gamma}_{(a,b)}\big[\Psi(a^{-1}x,b^{-1}y)\big] \\
    &=\iota(\Phi*\Psi)(x)(y). 
\end{align*} Finally, \begin{align*}
\iota(\Phi^*)(x)(y)=\Phi^*(x,y)=\Gamma_{y}\big[\Phi(x^{-1},y^{-1})^*\big]=\iota(\Phi)(x^{-1})^*(y)=\iota(\Phi)^*(x)(y).
\end{align*} So $\ell^1_\Gamma(\G',\mathfrak A)\cong \ell^1\big(\G,\ell^1_{\Gamma}(\H,\mathfrak A)\big)\cong \ell^1(\G)\,\hat{\otimes}\,\ell^1_{\Gamma}(\H,\mathfrak A)$.
\end{proof}

We can put together the previous lemmas to obtain the following result, which is Theorem \ref{promised} for transitive groupoids.

\begin{thm}\label{trans}
Let $\Xi$ be a discrete transitive groupoid with isotropy subgroup $\G$. If $\G$ is symmetric (resp. hypersymmetric), then $\Xi$ is symmetric (resp. hypersymmetric).
\end{thm}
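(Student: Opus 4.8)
The plan is to feed the transitive groupoid $\Xi$ through the reduction machinery of Sections~\ref{pre} and~\ref{char}, turning its (hyper)symmetry into a statement about the isotropy group $\G$ combined with the pair‑groupoid structure carried by the unit space $\U$. I would treat the hypersymmetric case first; the symmetric case is the specialization $\mathfrak A=\mathbb{C}$ and proceeds in parallel, using symmetry of $\G$ where hypersymmetry was used.

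First I would invoke Corollary~\ref{usecor}: to prove $\Xi$ hypersymmetric it suffices to show $\ell^{\infty,1}(\Xi,\mathfrak A)$ is symmetric for every $C^*$-algebra $\mathfrak A$. By Proposition~\ref{lema} I may replace $\Xi$ by the transformation groupoid $\G'\ltimes_\gamma\U$ with $\G'=\G\times\U$ and $\gamma_{(\alpha,w)}(v)=w+v$, so the goal becomes the symmetry of $\ell^{\infty,1}(\G'\ltimes_\gamma\U,\mathfrak A)$. Corollary~\ref{redstat} then reduces this, via the contractive epimorphism of Lemma~\ref{reduction}, to the symmetry of the crossed‑product algebra $\ell^1_\Gamma\big(\G',\mathcal C_0(\U,\mathfrak A)\big)$, where $\Gamma$ is the induced action on $\mathcal C_0(\U,\mathfrak A)$.

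The decisive structural point is that, for the action $\gamma$ above, the isotropy factor $\G$ acts trivially on $\U$ (indeed $\gamma_{(\alpha,0)}=\mathrm{id}_\U$), while the unit‑space factor $\U$ acts by translation; computing $\Gamma_{(\alpha,w)}f(u)=f(u-w)$ shows $\Gamma$ restricts to the \emph{trivial} action of $\G$ on $\mathcal C_0(\U,\mathfrak A)$. This is exactly the hypothesis of Proposition~\ref{separation}, which I apply with acting group $\G\times\U$, trivially‑acting factor $\G$, and coefficient $C^*$-algebra $\mathcal C_0(\U,\mathfrak A)$, to obtain
\[
\ell^1_\Gamma\big(\G',\mathcal C_0(\U,\mathfrak A)\big)\;\cong\;\ell^1(\G)\,\hat{\otimes}\,\ell^1_\Gamma\big(\U,\mathcal C_0(\U,\mathfrak A)\big).
\]
The second factor is the $\ell^1$-crossed product of the \emph{abelian} group $\U$ by the translation action on a $C^*$-algebra, hence a symmetric Banach $^*$-algebra. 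To finish I would read this isomorphism backwards and recognize the whole algebra as the $C^*$-coefficient crossed product of the group $\G\times\U$: since $\G$ is hypersymmetric and $\U$ is abelian, $\G\times\U$ should be hypersymmetric, and one then appeals to the available group theory guaranteeing that a hypersymmetric group has symmetric $\ell^1$-crossed products for every $C^*$-dynamical system. Tracing the chain back through Corollaries~\ref{redstat} and~\ref{usecor} then gives that $\Xi$ is hypersymmetric.

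I expect this last step to be the main obstacle. The difficulty is that hypersymmetry of $\G$ directly controls only tensor products $\ell^1(\G)\,\hat{\otimes}\,\mathfrak C$ with $C^*$-algebras $\mathfrak C$, whereas the coefficient factor $\ell^1_\Gamma(\U,\mathcal C_0(\U,\mathfrak A))$ is a genuine Banach $^*$-algebra whose norm is not a $C^*$-norm; it therefore admits no isometric embedding into a $C^*$-algebra, and the subalgebra descent of symmetry used via \cite[Theorem 11.4.2]{Pa2} cannot be applied to it on the nose. Bridging this gap is precisely where the group‑theoretic input must be pinned down: stability of (rigid) symmetry under direct products, together with the equivalence between rigid symmetry and symmetry of all $\ell^1$-$C^*$-crossed products. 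For the merely symmetric case the same obstruction reappears with $\mathfrak A=\mathbb{C}$, but there the commutativity of $\mathcal C_0(\U)$ and the essentially matricial nature of $\ell^1_\Gamma(\U,\mathcal C_0(\U))$ (its enveloping $C^*$-algebra being the compacts on $\ell^2(\U)$) should keep symmetry, rather than hypersymmetry, of $\G$ sufficient, via the principle that generalized matrix algebras over a symmetric algebra remain symmetric.
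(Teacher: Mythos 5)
Your reduction chain (Corollary \ref{usecor} $\to$ Proposition \ref{lema} $\to$ Corollary \ref{redstat}) is exactly the paper's, and your hypersymmetric case is essentially the paper's proof: the ``available group theory'' you appeal to is precisely \cite[Theorem 7]{LP} (rigid symmetry is stable under direct products with abelian groups), combined with the fact that for discrete groups rigid symmetry coincides with hypersymmetry (\cite[Theorem 2.4]{JM}, cf.\ Remark \ref{rigid}), and with the observation that an $\ell^1$-crossed product by a $C^*$-dynamical system is the Hahn algebra of the Fell bundle of Definition \ref{aktionbundle}, hence symmetric whenever the acting group is hypersymmetric. Note that the paper skips Proposition \ref{separation} entirely in this case: once $\G\times\U$ is known to be hypersymmetric, $\ell^1_\Gamma\big(\G',\mathcal C_0(\U,\mathfrak A)\big)$ is symmetric outright, so your detour through the tensor decomposition and back is unnecessary (though harmless).

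The symmetric case is where your sketch has a genuine gap. The ``principle that generalized matrix algebras over a symmetric algebra remain symmetric'' cannot be applied to $\ell^1(\G)\,\hat{\otimes}\,\ell^1_{\rm lt}\big(\U,\mathcal C_0(\U)\big)$ as it stands: the coefficient factor is only a dense $^*$-subalgebra of its enveloping $C^*$-algebra $\mathcal K(\ell^2(\U))$, the canonical map into it is neither isometric nor surjective, and symmetry descends to closed $^*$-subalgebras and passes to quotients, but not to dense subalgebras --- so symmetry of $\ell^1(\G)\,\hat{\otimes}\,\mathcal K(\ell^2(\U))$ by itself gives nothing. The paper bridges this with a device your proposal is missing: a second application of Lemma \ref{embeddinglemma}, now to the group $\U$ with the translation action, yielding an isometric embedding $\ell^1_{\rm lt}\big(\U,\mathcal C_0(\U)\big)\hookrightarrow\ell^1(\U)\,\hat{\otimes}\,\big(\U\ltimes_{\rm lt}\mathcal C_0(\U)\big)$, after which Stone--von Neumann identifies $\U\ltimes_{\rm lt}\mathcal C_0(\U)\cong\mathcal K(\ell^2(\U))$. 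Rearranging tensor factors, $\ell^1_\Gamma\big(\G',\mathcal C_0(\U)\big)$ embeds isometrically into $\ell^1(\U)\,\hat{\otimes}\,\ell^1\big(\G,\mathcal K(\ell^2(\U))\big)$; then \cite[Theorem 1]{Ku} (your ``matricial'' principle, now legitimately applicable because the coefficients are genuinely the $C^*$-algebra of compacts) gives symmetry of $\ell^1\big(\G,\mathcal K(\ell^2(\U))\big)$ from symmetry of $\G$, \cite[Theorem 5]{LeN} absorbs the extra abelian factor $\ell^1(\U)$ that your sketch does not account for, and \cite[Theorem 11.4.2]{Pa2} descends symmetry to the closed $^*$-subalgebra. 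With those three citations and the re-embedding step supplied, your outline becomes the paper's proof.
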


\begin{proof}
Let us divide the proof in two cases, the first one being about hypersymmetry. Because of Proposition \ref{lema}, we may assume that $\Xi=\G'\ltimes_\gamma \H$, with $\G'=\G\times \H$ and $\gamma_{(g,h)}(k)=h+k$.

First suppose that $\G$ is hypersymmetric. Because of corollaries \ref{usecor} and \ref{redstat}, it is enough to show that $\G\times \H$ is hypersymmetric (or 'rigidly symmetric', see Remark \ref{rigid}). But this follows from the fact that $\H$ is abelian and \cite[Theorem 7]{LP}.

Now suppose that $\G$ is only symmetric. Note that $\gamma|_\H$ coincides with the action of $\H$ on itself by left translation, so we will denote it by ${\rm lt}$. Then
    \begin{align*}\label{incl2}
    \ell^1_\Gamma\big(\G',\mathcal C_0(\H)\big)&\overset{\eqref{eq2}}{\cong} \ell^1(\G)\,\hat{\otimes}\,\ell^1_{\rm lt}\big(\H,\mathcal C_0(\H)\big) \\
    &\overset{\eqref{eq}}{\hookrightarrow} \ell^1(\G)\,\hat{\otimes}\,\ell^1(\H)\,\hat{\otimes}\,\big(\H\ltimes_{\rm lt}\mathcal C_0(\H)\big) \\
    &\cong \ell^1(\H)\,\hat{\otimes}\,\ell^1\big(\G,\mathcal K(\ell^2(\H)\big)). 
\end{align*} The final isomorphism holds because of the Stone-von Neumann theorem: $\H\ltimes_{\rm lt}\mathcal C_0(\H)\cong \mathcal K(\ell^2(\H)\big)$. $\ell^1\big(\G,\mathcal K(\ell^2(\H)\big))$ is symmetric because of \cite[Theorem 1]{Ku}, hence $\ell^1(\H)\,\hat{\otimes}\,\ell^1\big(\G,\mathcal K(\ell^2(\H)\big))$ is symmetric because of \cite[Theorem 5]{LeN}. We conclude that $\ell^1_\Gamma\big(\G',\mathcal C_0(\H)\big)$ and thus $\ell^{\infty,1}(\Xi)$ are symmetric.
\end{proof}

\begin{cor}\label{pair}
The pair groupoid over a discrete set $\U$ is hypersymmetric.
\end{cor}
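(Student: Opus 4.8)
The plan is to realize the pair groupoid as a transitive groupoid with trivial isotropy group and then invoke Theorem \ref{trans}. The Remark following Proposition \ref{lema} already records that the pair groupoid $\U\times\U$ is isomorphic to the transformation groupoid $\U\ltimes_{\gamma|_\U}\U$, for which the isotropy group is the trivial group $\G=\{\e\}$; equivalently, one computes directly (as in the Example after Definition \ref{groupsymmetric}) that $(\U\times\U)_u^u=\{(u,u)\}$ for every unit $u$. Thus the pair groupoid is transitive with trivial isotropy group, which is precisely the hypothesis format of Theorem \ref{trans}.

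It then remains to confirm that the trivial group $\{\e\}$ is hypersymmetric. Unwinding the group version of hypersymmetry (rigid symmetry, see Remark \ref{rigid}), this asks that $\ell^1(\{\e\},\mathfrak A)$ be symmetric for every $C^*$-algebra $\mathfrak A$. But $\ell^1(\{\e\},\mathfrak A)\cong\mathfrak A$, and every $C^*$-algebra is symmetric in the sense of Definition \ref{symmetric}, so the trivial group is hypersymmetric.

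With these two observations in hand, Theorem \ref{trans} applies verbatim in its hypersymmetric case to the transitive groupoid $\U\times\U$ and yields that it is hypersymmetric. I expect no genuine obstacle in this argument: the corollary is a direct specialization of Theorem \ref{trans}, and the only nontrivial input is the elementary fact that $C^*$-algebras are symmetric, which is exactly what makes the (trivial) isotropy group hypersymmetric.
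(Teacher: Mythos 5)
Your proposal is correct and matches the paper's intended argument: Corollary \ref{pair} is a direct specialization of Theorem \ref{trans}, using that the pair groupoid is transitive with trivial isotropy group $\{(u,u)\}$, and that the trivial group is hypersymmetric because $\ell^1(\{\e\},\mathfrak A)\cong\mathfrak A$ is a $C^*$-algebra, hence symmetric. The details you fill in (the identification via the Remark after Proposition \ref{lema} and the symmetry of $C^*$-algebras) are exactly the ones the paper leaves implicit.
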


Now we will proceed to upgrade Theorem \ref{trans} to the case of general discrete groupoids.

\begin{thm}\label{full}
A discrete groupoid $\Xi$ is symmetric (resp. hypersymmetric) if and only if its isotropy subgroups are symmetric (resp. hypersymmetric). 
\end{thm}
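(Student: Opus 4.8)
The plan is to deduce both implications from the transitive case already settled in Theorem \ref{trans}: the ``if'' direction through the orbit decomposition of $\Xi$, and the ``only if'' direction through a subalgebra embedding. First I would decompose the unit space into orbits of the canonical action, i.e. into the classes of the equivalence relation $u\approx v\iff \exists\,x\in\Xi$ with $\d(x)=v$ and $\r(x)=u$. Writing $\{\mathcal O_j\}_{j}$ for these orbits and $\Xi_j:=\d^{-1}(\mathcal O_j)=\r^{-1}(\mathcal O_j)$, each $\Xi_j$ is a transitive subgroupoid, $\Xi=\bigsqcup_j\Xi_j$, and there is no composable pair $(x,y)\in\Xi^{(2)}$ with $x,y$ in different pieces. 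Consequently the isotropy groups of $\Xi_j$ are exactly the $\Xi_u^u$ with $u\in\mathcal O_j$. Restricting a Fell bundle $\mathscr C$ to each $\Xi_j$, and recalling that the Hahn norm \eqref{parts} is a supremum over units while the operations \eqref{tiplication}--\eqref{tion} are ``block diagonal'', I would check that restriction of sections yields an isometric $^*$-isomorphism
$$
\ell^{\infty,1}(\Xi\!\mid\!\mathscr C)\;\cong\;\Big(\bigoplus_j \ell^{\infty,1}(\Xi_j\!\mid\!\mathscr C|_{\Xi_j})\Big)_{c_0},
$$
the $c_0$-nature of the direct sum being precisely the vanishing-at-infinity recorded in Remark \ref{convergence}.

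For the ``if'' direction, assume all isotropy groups are symmetric (resp. hypersymmetric). By Theorem \ref{trans} every summand $\ell^{\infty,1}(\Xi_j)$ (resp. $\ell^{\infty,1}(\Xi_j\!\mid\!\mathscr C|_{\Xi_j})$) is symmetric, so it remains to see that a $c_0$-direct sum of symmetric Banach $^*$-algebras is symmetric. The key point is that in such a sum the spectral radius is computed coordinatewise, $\rho\big((a_j)_j\big)=\sup_j\rho(a_j)$: the inequality ``$\ge$'' is immediate since the coordinate projections are contractive $^*$-homomorphisms, and ``$\le$'' uses $\|a_j\|\to0$ to control the tail. Applying this both to $a$ and to $a^*a$, and using that each summand satisfies Pt\'ak's inequality $\rho(a_j)\le\rho(a_j^*a_j)^{1/2}$, one obtains $\rho(a)=\sup_j\rho(a_j)\le\sup_j\rho(a_j^*a_j)^{1/2}=\rho(a^*a)^{1/2}$ for every $a$; by Pt\'ak's characterization of symmetry (\cite{Pa2}) the sum is then symmetric. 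Taking the trivial bundle yields the symmetric case and an arbitrary $\mathscr C$ yields the hypersymmetric case.

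For the ``only if'' direction, fix a unit $u$ and extend sections supported on the isotropy group $\Xi_u^u$ by zero. Since every $x\in\Xi_u^u$ has $\r(x)=\d(x)=u$, the Hahn norm \eqref{parts} collapses to the $\ell^1$-norm on such sections, and since $\Xi_u^u$ is a subgroupoid this extension is a $^*$-homomorphism; thus $\ell^1(\Xi_u^u)\hookrightarrow\ell^{\infty,1}(\Xi)$, and with constant fibre $\mathfrak A$ and trivial action $\ell^1(\Xi_u^u,\mathfrak A)\hookrightarrow\ell^{\infty,1}(\Xi,\mathfrak A)$ for every $C^*$-algebra $\mathfrak A$. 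Symmetry is inherited by closed $^*$-subalgebras (\cite[Theorem 11.4.2]{Pa2}, exactly as in Corollary \ref{usecor}), so if $\Xi$ is symmetric then $\ell^1(\Xi_u^u)$ is symmetric, i.e. $\Xi_u^u$ is symmetric; and if $\Xi$ is hypersymmetric then, by Corollary \ref{usecor}, $\ell^1(\Xi_u^u,\mathfrak A)$ is symmetric for all $\mathfrak A$, which by the group characterization \cite[Theorem 2.4]{JM} means $\Xi_u^u$ is hypersymmetric.

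The step I expect to be the main obstacle is the symmetry of the $c_0$-direct sum: phrased through resolvents it would appear to demand a uniform resolvent bound across the summands, and the point of routing through the Pt\'ak function is precisely to sidestep this, at the cost of first establishing the coordinatewise formula for the spectral radius (which genuinely uses the $c_0$ condition, not merely uniform boundedness of the $a_j$). By contrast, the remaining verifications—that restriction gives an isometric $^*$-isomorphism onto the $c_0$-sum, and that extension by zero is an isometric $^*$-embedding of the isotropy algebras—are routine once Remark \ref{convergence} is in hand.
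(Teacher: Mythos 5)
Your proposal is correct, and its skeleton---orbit decomposition into transitive subgroupoids, Theorem \ref{trans} on each piece, zero-extension embedding plus \cite[Theorem 11.4.2]{Pa2} and \cite[Theorem 2.4]{JM} for the ``only if'' direction---coincides with the paper's. Where you genuinely diverge is in how the infinitely many orbits are handled, and this is the only analytically nontrivial step. The paper fixes a single self-adjoint $\Phi$, uses its countable support (Remark \ref{convergence}) to truncate to sections $\Phi_n$ living on finitely many transitive pieces, identifies $\ell^{\infty,1}\big(\bigcup_{i\le n}\Xi(i),\mathfrak A\big)$ with a finite direct sum, and then concludes via the identity \eqref{spectra} expressing ${\rm Spec}(\Phi)$ through the spectra of the truncations---an identity that is asserted without proof (and whose justification is exactly the kind of tail estimate you carry out). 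You instead prove a global structural statement: restriction is an isometric $^*$-isomorphism of $\ell^{\infty,1}(\Xi\!\mid\!\mathscr C)$ onto the $c_0$-direct sum of the $\ell^{\infty,1}(\Xi_j\!\mid\!\mathscr C|_{\Xi_j})$, and a $c_0$-sum of symmetric Banach $^*$-algebras is symmetric, via the coordinatewise spectral radius formula and Pt\'ak's criterion. Your resolvent argument---inverting $\lambda-a$ blockwise and using $\|a_j\|\to 0$ to keep the inverse inside the $c_0$-sum---is precisely the missing proof of the paper's \eqref{spectra}, so your route is more self-contained, at the cost of setting up the direct-sum machinery. Two further differences worth noting: since you restrict arbitrary Fell bundles to the pieces $\Xi_j$, you never need Corollary \ref{usecor} in the ``if'' direction (the paper invokes it to reduce to constant fibers before decomposing), and you could also bypass Pt\'ak entirely, since the same resolvent argument yields ${\rm Spec}_A(a^*a)\subset\bigcup_j{\rm Spec}_{A_j}(a_j^*a_j)\cup\{0\}\subset[0,\infty)$ directly.
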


\begin{proof}
Is obvious that symmetry (resp. hypersymmetry) of $\Xi$ implies the symmetry (resp. hypersymmetry) of its isotropy groups ($\ell^{1}(\Xi_u^u,\mathfrak A)\hookrightarrow \ell^{\infty,1}(\Xi,\mathfrak A)$ in a natural way). As the 'only if' part is clear, let us prove the 'if' part.

Let $\mathfrak A$ be a $C^*$-algebra and $\Phi$ be an arbitrary section in $\ell^{\infty,1}(\Xi,\mathfrak A)$. $\Phi$ has a countable support (see Remark \ref{convergence}), and since every discrete groupoid can be decomposed as a disjoint union of discrete transitive subgroupoids, we can find a countable number of disjoint transitive subgroupoids $\{\Xi(i)\}_{i\in \N}$, such that ${\rm supp}(\Phi)\subset \bigcup_{i=1}^\infty \Xi(i) $. Define $\Phi_n\in \ell^{\infty,1}(\Xi,\mathfrak A)$ as $$\Phi_n(x):=\left\{\begin{array}{ll}
\Phi(x)    & \textup{if\ } x\in \bigcup_{i=1}^n \Xi(i) , \\
0_\mathfrak A     & \textup{if\ } x\not\in \bigcup_{i=1}^n \Xi(i) . \\
\end{array}\right.$$ We have that $\lim_n\Phi_n= \Phi$ in $\ell^{\infty,1}(\Xi,\mathfrak A)$, but more importantly, 
\begin{equation}\label{spectra}
    {\rm Spec}_{\ell^{\infty,1}(\Xi,\mathfrak A)}(\Phi)=\bigcup_{i=1}^\infty{\rm Spec}_{\ell^{\infty,1}(\Xi,\mathfrak A)}(\Phi_n).
\end{equation} So, to conclude, its enough to prove that ${\rm Spec}_{\ell^{\infty,1}(\Xi,\mathfrak A)}(\Phi_n)\subset \R$, whenever $\Phi^*=\Phi$. Indeed, if $\Phi^*=\Phi$, after fixing $n$ we see that $\Phi_n^*=\Phi_n$ and $\ell^{\infty,1}(\Xi,\mathfrak A)$ contains an isometric $^*$-isomorphic copy of $\ell^{\infty,1}(\bigcup_{i=1}^n\Xi(i),\mathfrak A)$, obtained by extending the sections in the latter algebra with zeros outside its original domain. So by definition, one has that $\Phi_n\in\ell^{\infty,1}(\bigcup_{i=1}^n\Xi(i),\mathfrak A) \subset \ell^{\infty,1}(\Xi,\mathfrak A)$, which implies that
\begin{equation*}
    {\rm Spec}_{\ell^{\infty,1}(\Xi,\mathfrak A)}(\Phi_n)\subset{\rm Spec}_{\ell^{\infty,1}(\bigcup_{i=1}^n\Xi(i),\mathfrak A)}(\Phi_n).
\end{equation*}
But $\ell^{\infty,1}(\bigcup_{i=1}^n\Xi(i),\mathfrak A)\cong \bigoplus_{i=1}^n \ell^{\infty,1}(\Xi(i),\mathfrak A)$ and since the (finite) direct sum of symmetric Banach $^*$-algebras is symmetric, ${\rm Spec}_{\ell^{\infty,1}(\bigcup_{i=1}^n\Xi(i),\mathfrak A)}(\Phi_n)\subset \R$ and the result follows.
\end{proof}

\begin{ex}
Let $\Xi=\G\ltimes_\gamma\!X$ be a transformation groupoid (see Definition \ref{starlet}), where $X$ is discrete. In this case, $\mathcal U=X$ and $\Xi_u^u=\{g\in G\mid \gamma_g(u)=u\}\times\{u\}$, which is identificable with the stabilizer of $u$, namely ${\rm Stab}_\gamma(u)\leq \G$.
\end{ex}

\begin{rem}
We will finish this paper with a bit of wishful thinking: Is reasonable to believe that the groupoid analog of \cite[Theorem 3.3]{FJM} could be true. For a general locally compact groupoid one can define symmetry and hypersymmetry for $\Xi$ as the symmetry of the Hahn algebras $L^{\infty,1}(\Xi)$ and $L^{\infty,1}(\Xi\!\mid\!\mathscr C)$, respectively. So we may pose two problems: \begin{enumerate}
    \item[$(i)$] Is the (hyper)symmetry of a Hausdorff locally compact groupoid $\Xi$ implied by the (hyper)symmetry of its discretization $\Xi^{\rm dis}$?
    \item[$(ii)$] Is Corollary \ref{usecor} still valid for \'etale groupoids?
\end{enumerate} In view of Theorem \ref{promised}, $(i)$ is equivalent to \begin{enumerate}
    \item[$(i')$] Is the (hyper)symmetry of a Hausdorff locally compact groupoid $\Xi$ implied by the (hyper)symmetry of its discretized isotropy groups $(\Xi_u^u)^{\rm dis}$?
\end{enumerate}
\end{rem}


\bigskip
\bigskip
ADDRESS

\smallskip
F. Flores

Department of Mathematics, University of Virginia,

114 Kerchof Hall. 141 Cabell Dr,

Charlottesville, Virginia, United States

E-mail: hmy3tf@virginia.edu

\end{document}